\pgfplotsset{compat=1.8}
\newcommand{\margnote}[1]{
\ifthenelse{\boolean{shownotes}}%
{\marginpar{\raggedright\tiny\texttt{#1}}}%
{}%
}
\newcommand{\hole}[1]{
\ifthenelse{\boolean{shownotes}}%
{\begin{center} \fbox{ \rule {.25cm}{0cm}
\rule[-.1cm]{0cm}{.4cm} \parbox{.85\textwidth}{\begin{center}
\texttt{#1}\end{center}} \rule {.25cm}{0cm}}\end{center}}
{}
}
\newtheorem{thm}{Theorem}[section]
\newtheorem{prop}[thm]{Proposition}
\theoremstyle{definition}
\newtheorem{defn}[thm]{Definition}
\newcommand{\e}{\varepsilon}		       
\newcommand{\R}{\mathbb{R}}
\newcommand{\Z}{\mathbb{Z}}
\newcommand{\sgn}{\text{sgn}}
\newcommand{\de}{\mathrm{d}}
\numberwithin{equation}{section}
\begin{document}

\title[Smooth approximations and selection of flows]{On smooth approximations of rough vector fields and the selection of flows}

\author[G. Ciampa]{Gennaro Ciampa}
\address[G. Ciampa]{GSSI - Gran Sasso Science Institute\\ Viale Francesco Crispi 7 \\67100 L'Aquila \\Italy \& Department Mathematik Und Informatik\\ Universit\"at Basel \\Spiegelgasse 1 \\CH-4051 Basel \\ Switzerland}
\email[]{\href{gennaro.ciampa@}{gennaro.ciampa@gssi.it},\href{gennaro.ciampa@2}{gennaro.ciampa@unibas.ch}}

\author[G. Crippa]{Gianluca Crippa}
\address[G. Crippa]{Department Mathematik Und Informatik\\ Universit\"at Basel \\Spiegelgasse 1 \\CH-4051 Basel \\ Switzerland}
\email[]{\href{gianluca.crippa@}{gianluca.crippa@unibas.ch}}

\author[S. Spirito]{Stefano Spirito}
\address[S. Spirito]{DISIM - Dipartimento di Ingegneria e Scienze dell'Informazione e Matematica\\ Universit\'a degli Studi dell'Aquila \\Via Vetoio \\ 67100 L'Aquila \\ Italy}
\email[]{\href{stefano.spirito@}{stefano.spirito@univaq.it}}

\begin{abstract}
In this work we deal with the selection problem of flows of an irregular vector field. We first summarize an example from \cite{CCS} of a vector field $b$ and a smooth approximation $b_\e$ for which the sequence $X^\e$ of flows of $b_\e$ has subsequences converging to different flows of the limit vector field $b$. Furthermore, we give some heuristic ideas on the selection of a subclass of flows in our specific case.
\end{abstract}

\maketitle

\section{Introduction and notations}
Consider the system of ordinary differential equations
\begin{equation}
\begin{dcases}
\frac{\de}{\de t}X(t,x)=b(t,X(t,x)), \\
X(0,x)=x,
\end{dcases}
\label{eq:ode}
\end{equation}
where $(t,x)\in (0,T)\times\R^{d}$ are the independent variables, with $T<\infty$, $b:(0,T)\times\R^{d}\to\R^{d}$ is a given vector field and $X:(0,T)\times\R^{d}\to\R^{d}$ is the unknown. A solution $X$ of \eqref{eq:ode} is called \emph{flow} of $b$. The well posedness of \eqref{eq:ode} is a well known result when the vector field $b$ is globally Lipschitz in space uniformly in time. The system \eqref{eq:ode} is strictly connected to the Cauchy problem for the linear transport equation
\begin{equation}
\begin{cases}
\partial_t u+ b \cdot \nabla u=0, \\
u|_{t=0}=u_0,
\end{cases}
\label{eq:te}
\end{equation}
since in a smooth setting, the unique solution of \eqref{eq:te} is given by the formula $u(t,x)=u_{0}((X(t,\cdot))^{-1}(x))$, where $X$ is the unique flow of $b$. \\
Besides the theoretical interest, due to applications to several equations from mathematical physics the setting of smooth vector fields is too restrictive and a theory under assumptions of lower regularity was developed in the last years.
Exploiting the connection between \eqref{eq:te} and \eqref{eq:ode}, DiPerna and Lions in \cite{DPL} proved the well posedness of \eqref{eq:te} under hypotesis of Sobolev regularity for the vector field and bounded divergence. As a consequence of their result, they proved well posedness of \eqref{eq:ode} under the same hypotesis. Similarly, Ambrosio in \cite{Am} improved the result of \cite{DPL} to the case of BV regularity and bounded divergence for $b$. On the other hand, a well posedness theory based only on \emph{a prori} estimates of the flow was developed in \cite{CDL} for $W^{1,p}$ vector field with $p>1$ and in \cite{BC},\cite{CNSS} for the case $p=1$ and vector fields whith gradient given by a singular integral of a $L^1$ function. This latter is a class of interest in the context of 2D Euler equations. More recently Nguyen in \cite{N} improved the result to vector fields which can be represented as singular integral of a function in $BV$. \\
Various counterexamples show that weak differentiability assumptions on the vector field are in general necessary in order to obtain well posedness, see for instance \cite{DeP}, \cite{DPL}. For a general survey on this topic, we refer to \cite{AC}.
The aim of this note is to discuss the selection problem for solutions of \eqref{eq:ode} in a low regularity setting. To better explain what we mean by selection, let us first recall some preliminary notations and definitions. \\
We denote by $\mathscr{L}^d$ the Lebesgue measure on $\R^d$. If $f:\R^d\to\R$ is a Borel map we denote by $f\#\mathscr{L}^d$ the \emph{push forward}, that is, the measure defined by the following relation
$$
f\#\mathscr{L}^d(E)=\mathscr{L}^d(f^{-1}(E)) \hspace{1cm}\mathrm{for \ every \ Borel \ set} \ E\subset\R^d.
$$
The definition of flow of a vector field $b$, when $b$ is not smooth, is the following:
\begin{defn}
Let $b\in L^1((0,T);L^1_{\mathrm{loc}}(\R^d;\R^d))$ be given. We say that $X:(0,T)\times\R^d\rightarrow\R^d$ is a regular Lagrangian flow associated to $b$ if
\begin{enumerate}
\item for a.e. $x\in\R^d$ the map $t\rightarrow X(t,x)$ is an absolutely continuous integral solution of the ordinary differential equation
\begin{equation}
\begin{dcases}
\frac{\de}{\de t}X(t,x)=b(t,X(t,x)), \\
X(0,x)=x,
\end{dcases}
\end{equation}
\item there exists a constant $L$ indipendent of $t$ such that
\begin{equation}\label{eq:incom}
X(t,\cdot)\# \mathscr{L}^d\leq L \mathscr{L}^d.
\end{equation}
\end{enumerate}
\end{defn}
If the vector field is divergence-free, $L$ can be taken to be $1$ and $\eqref{eq:incom}$ is an equality. This means that the flow $X$ is measure preserving.
Condition \eqref{eq:incom} is a first selection: we only consider among all solutions of \eqref{eq:ode} those that do not “compress” too much the Lebesgue measure. This selection is necessary in the theory since it is not known if there is uniqueness in the class of flows that can compress the Lebesgue measure, even under assumptions of weak differentiability and zero divergence for the vector field.\\
The paper is divided as follows. In Section 2 we give a precise statement for the selection problem and we give an example of a vector field and of an approximation for which the selection is not true. In Section 3 we characterize a class of flows through measure preserving maps of the unit circle. Finally in Section 4 we introduce a new question about the selection of a subset of the set of all flows and we give some ideas and heuristics about what we can expect.

\section{The problem of selection}
Let us consider a weakly differentiable vector field $b$ which falls into the class of well-posedness like those discussed in the introduction. To prove the existence of solutions of \eqref{eq:ode}, the natural approach is to rely on a compactness argument for an approximating sequence $X^\e$. This latter is usually constructed as the (unique) flow of a smooth approximation $b_\e$ of $b$, see \cite{AC}. Consider, instead, a vector field $b$ that has more than one regular Lagrangian flow and let $b_\e$ be a smooth approximation of $b$. Consider the solution $X^\e$ of the ODE relative to $b_\e$ and assume that $X^\e$ converges to a regular Lagrangian flow $X$ of $b$. We wonder if for every approximation $b_\e$ the corresponding flows $X^\e$ can converge to only one regular Lagrangian flow: if this were true, this procedure could be considered as a selection principle for the flows of an irregular vector field. We can summarize the previous discussion in the following: question
\begin{itemize}
\item[(Q1)]\emph{Does the approximation procedure obtained by smoothing the vector field select a unique solution of \eqref{eq:ode}?}
\end{itemize}
In \cite{CCS} we give a negative answer to the previous question showing a counterexample. Precisely, we consider this vector field, which is a 3D analogous of an example of DiPerna and Lions \cite{DPL}:
\begin{equation}
b(x,y,z)=
\begin{dcases}
\left(- \sgn(z) \frac{x}{|z|^2},- \sgn(z) \frac{y}{|z|^2}, - \frac{2}{|z|} \right)  & \mathrm{if} \ x\in P,\\ 
(0,0,0) & \mathrm{otherwise},
\end{dcases}
\label{eq:b}
\end{equation}
where $P\subset\R^{3}$ denotes the set 
\[
P=P^+\cup P^-=\{ (x,y,z)\in\mathbb{R}^3: x^2+y^2\leq z \}\cup \{\ (x,y,z)\in\mathbb{R}^3:x^2+y^2\leq -z\},
\]
the union of two symmetric paraboloids. \\ The vector field $b$ is divergence-free and it is out of the class of uniqueness of solutions of \eqref{eq:ode}. In particular, observe that $b$ is not in any Sobolev space $W^{1,p}(\R^3)$ or in $BV(\R^3)$. 
\\
We want to define two different regular Lagrangian flows $\bar{X},\tilde{X}$ of $b$ and, since we are considering flows defined almost everywhere, we need to define $\bar{X},\tilde{X}$ only on $\R^3\setminus \{0\}$. We start for $\mathbf{x}\in\R^3\setminus P$: in this region the vector field is identically $0$ so that we define $\bar{X},\tilde{X}$ simply as
$$
\bar{X}(t,\mathbf{x})=\mathbf{x}=\tilde{X}(t,\mathbf{x}) \hspace{1cm}\forall t\geq 0.
$$
If $\mathbf{x}=(x,y,z)\in P^-$ we define $\bar{X},\tilde{X}$ as
\begin{equation}\label{x}
\begin{dcases}
\bar{X}_1(t,x,z)=\tilde{X}_1(t,x,z)=\frac{x}{\sqrt{-z}}\sqrt[4]{z^2+4t} \\
\bar{X}_2(t,y,z)=\tilde{X}_2(t,y,z)=\frac{y}{\sqrt{-z}}\sqrt[4]{z^2+4t} \\
\bar{X}_3(t,z)=\tilde{X}_3(t,z)=-\sqrt{z^2+4t}
\end{dcases}
\hspace{0.7cm} \forall \ t\geq 0.
\end{equation}
Finally, when $\mathbf{x}=(x,y,z)\in P^+$ define the flows as
\begin{equation}\label{x}
\begin{dcases}
\bar{X}_1(t,x,z)=\tilde{X}_1(t,x,z)=\frac{x}{\sqrt{z}}\sqrt[4]{z^2-4t} \\
\bar{X}_2(t,y,z)=\tilde{X}_2(t,y,z)=\frac{y}{\sqrt{z}}\sqrt[4]{z^2-4t} \\
\bar{X}_3(t,z)=\tilde{X}_3(t,z)=\sqrt{z^2-4t}
\end{dcases}
\hspace{0.7cm} \mathrm{for} \ t \in \left[0,\frac{z^2}{4} \right].
\end{equation}
At time $t=\frac{z^2}{4}$ the trajectories reach the origin and then one possible way to extend them for later times is
\begin{equation}\label{eq:thetasol}
\begin{dcases}
\bar{X}_1(t,x,z)=\frac{x}{\sqrt{z}}\sqrt[4]{4t-z^2} \cos\Theta - \frac{y}{\sqrt{z}}\sqrt[4]{4t-z^2} \sin\Theta\\
\bar{X}_2(t,y,z)=\frac{x}{\sqrt{z}}\sqrt[4]{4t-z^2} \sin\Theta+\frac{y}{\sqrt{z}}\sqrt[4]{4t-z^2} \cos\Theta \\
\bar{X}_3(t,z)=-\sqrt{4t-z^2}
\end{dcases}
\hspace{0.7cm} t \geq\frac{z^2}{4},
\end{equation}
while
\begin{equation}\label{eq:phisol}
\begin{dcases}
\tilde{X}_1(t,x,z)=\frac{x}{\sqrt{z}}\sqrt[4]{4t-z^2} \cos\Phi - \frac{y}{\sqrt{z}}\sqrt[4]{4t-z^2} \sin\Phi\\
\tilde{X}_2(t,y,z)=\frac{x}{\sqrt{z}}\sqrt[4]{4t-z^2} \sin\Phi+\frac{y}{\sqrt{z}}\sqrt[4]{4t-z^2} \cos\Phi \\
\tilde{X}_3(t,z)=-\sqrt{4t-z^2}
\end{dcases}
\hspace{0.7cm} t \geq\frac{z^2}{4},
\end{equation}
where $\Theta,\Phi\in (0,2\pi]$ and $\Theta\neq\Phi$. An easy computation shows that $\bar{X},\tilde{X}$ are two different regular Lagrangian flows of $b$. We call those kind of solutions respectively $X^\Theta,X^\Phi$, where $\Theta$ and $\Phi$ represent a rotation in the $xy$ plane. Heuristically, we can define this kind of flows as a consequence of the fact that the trajectories once they reach the origin can come out arbitrarily.
In \cite{CCS} one of our main results is the following:
\begin{thm}\label{teo:main2}
There exists a sequence of vector fields $b_n\in C^\infty(\R^{3})$ such that:
\begin{itemize}
\item $b_n$ is divergence-free;
\item $b_n\to b$ in $L_{\mathrm{loc}}^1(\R^{3})$;
\item the sequence $X^n$ of regular Lagrangian flows of $b_n$ has two different subsequences converging in $L^\infty((0,T);L^1_{\mathrm{loc}}(\R^3)))$ to two different regular Lagrangian flows of $b$.
\end{itemize}
\end{thm}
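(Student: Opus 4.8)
The plan is to build $b_n$ by regularizing the singularity of $b$ at the origin while grafting in a controlled rotation in the $xy$-plane, exploiting the cylindrical symmetry of $b$. Writing $b$ in cylindrical coordinates $(r,\varphi,z)$ with $x=r\cos\varphi$, $y=r\sin\varphi$, one checks that $b$ has vanishing angular component, radial component $b_r=-\sgn(z)\,r/|z|^2$ and vertical component $b_z=-2/|z|$, so that the quantity $r/\sqrt{|z|}$ is conserved along trajectories and only the origin produces non-uniqueness. The trajectories that feel the ambiguity are exactly those starting in $P^+$: they move down to the origin in finite time $z^2/4$ and must then be continued into $P^-$, the freedom being the rotation angle in the $xy$-plane at the passage through the origin. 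Trajectories starting in $P^-$ move away from the origin, and those outside $P$ are stationary, so the flow is otherwise rigid and $\bar X=\tilde X$ there.

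First I would fix a scale $\e_n\to 0$ and define $b_n$ to coincide with a standard divergence-free mollification of $b$ outside the ball $B_{\e_n}$, and inside $B_{\e_n}$ to be the sum of a bounded radial–vertical part and a purely rotational field $b_n^{\mathrm{rot}}=\omega_n(r,z)\,(-y,x,0)$ supported in $B_{\e_n}$. The radial–vertical part is built from a smooth axisymmetric Stokes stream function $\psi_n(r,z)$ equal to that of $b$ (namely $\psi\sim -r^2/|z|$) outside $B_{\e_n}$ and smoothly capped inside, which makes it automatically divergence-free with speed capped at order $\e_n^{-1}$; the field $\omega_n(r,z)\,(-y,x,0)$ is divergence-free for any profile $\omega_n=\omega_n(r,z)$ since it is a rotation with no angular or vertical dependence. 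Thus $b_n$ is smooth and divergence-free. Moreover $|b_n^{\mathrm{rot}}|\lesssim\omega_n\e_n$ on a set of volume $\lesssim\e_n^3$, so a bound $\omega_n\lesssim\e_n^{-2}$ gives $\|b_n^{\mathrm{rot}}\|_{L^1}\lesssim\e_n^2\to 0$, whence $b_n\to b$ in $L^1_{\mathrm{loc}}$.

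Next I would track the unique smooth flow $X^n$ of $b_n$ for an initial point in $P^+$ in three phases. Before entering $B_{\e_n}$ the field is close to $b$ and carries no rotation, so $X^n$ stays $L^1$-close to the common value $\bar X=\tilde X$. Inside $B_{\e_n}$ the capped vertical speed is of order $\e_n^{-1}$, so the transit time is $\tau_n\sim\e_n^2$, during which the trajectory accumulates a total rotation $\alpha_n=\int\omega_n\,\de t\sim\omega_n\tau_n$ and is carried from the $P^+$ side to the $P^-$ side. After exit, the field is again close to $b$ and the trajectory follows the structure of $X^{\alpha_n}$ as in \eqref{eq:thetasol}. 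The crucial point is that the tunable profile $\omega_n$ (with $\omega_n\sim\e_n^{-2}$) lets me prescribe $\alpha_n$ to be of order one: choosing it so that $\alpha_{2k}\to\Theta$ and $\alpha_{2k+1}\to\Phi$ (mod $2\pi$) produces the two subsequences of the statement.

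Finally I would upgrade these heuristics to genuine $L^\infty((0,T);L^1_{\mathrm{loc}})$ convergence of the two subsequences to $\bar X=X^\Theta$ and $\tilde X=X^\Phi$. Since $b$ lies outside every Sobolev and $BV$ class, off-the-shelf quantitative stability estimates for regular Lagrangian flows do not apply at the singularity, so convergence must be obtained by hand from the cylindrical symmetry: the vertical motion solves an autonomous one-dimensional ODE, the radial motion is then determined by the approximately conserved $r/\sqrt{|z|}$, and the angular motion solves $\dot\varphi=\omega_n$, so each can be compared directly with the explicit formulas for $\bar X$, $\tilde X$ and with \eqref{eq:thetasol}--\eqref{eq:phisol}. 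The comparison is made against $X^{\alpha_n}$, the exact flow of $b$ with rotation $\alpha_n$, so that the rotation discrepancy cancels; one then lets $\alpha_n\to\Theta$ (resp.\ $\Phi$) along the chosen subsequence and uses the continuity of $\alpha\mapsto X^{\alpha}$ in $L^1_{\mathrm{loc}}$. At each time $t$ the initial data currently inside $B_{\e_n}$ form a set of measure $\to 0$, and on it both $X^n$ and the limit lie within $O(\e_n)$ of the origin, so this set does not spoil the estimate uniformly in $t$. I expect the main obstacle to be precisely this near-singularity analysis: quantifying that the rotation accumulated in the thin transition region converges to the prescribed $\alpha_n$ and that the radial–vertical motion is asymptotically correct, with estimates uniform enough in the initial datum to pass to the limit in $L^\infty_t L^1_{\mathrm{loc}}$.
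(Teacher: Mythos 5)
Your proposal follows essentially the same approach as the paper: the proof of Theorem \ref{teo:main2} (detailed in \cite{CCS} and only sketched here) likewise modifies $b$ only in a small neighbourhood of the singularity at the origin, keeps the approximation divergence-free, and forces the trajectories to rotate very fast along a helix inside that region, with the accumulated winding angle tuned so that different subsequences converge to $X^\Theta$ and $X^\Phi$. Your grafted rotational field $\omega_n(r,z)(-y,x,0)$ with $\omega_n\sim\e_n^{-2}$ and the alternating choice $\alpha_{2k}\to\Theta$, $\alpha_{2k+1}\to\Phi$ is exactly a concrete implementation of that strategy.
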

\begin{figure}[h!]
\centering
\includegraphics[width=0.42\textwidth]{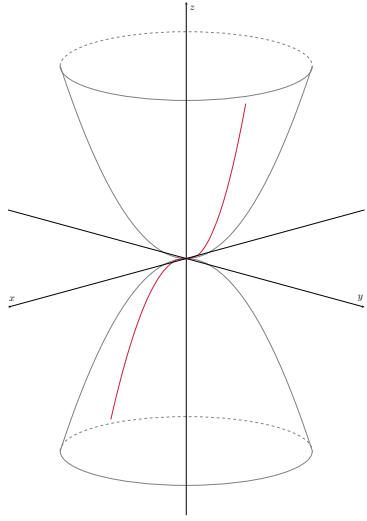}
\caption{An example of solution $X^\Theta$}
\end{figure}
In the proof of Theorem \ref{teo:main2}, given $\Theta,\Phi\in(0,2\pi]$ with $\Theta\neq\Phi$, we construct an explicit approximation which has two different subsequences converging respectively to $X^\Theta$ and $X^\Phi$. The strategy of the approximation is based on smoothing $b$ nearby the origin and forcing the trajectories to rotate very fast along a given helix. We basically modify $b$ in a small region with contains the singularity and leave the rest unchanged. \\
\begin{figure}[h!]\label{fig:clessidra}
\centering
\includegraphics[width=0.6\textwidth]{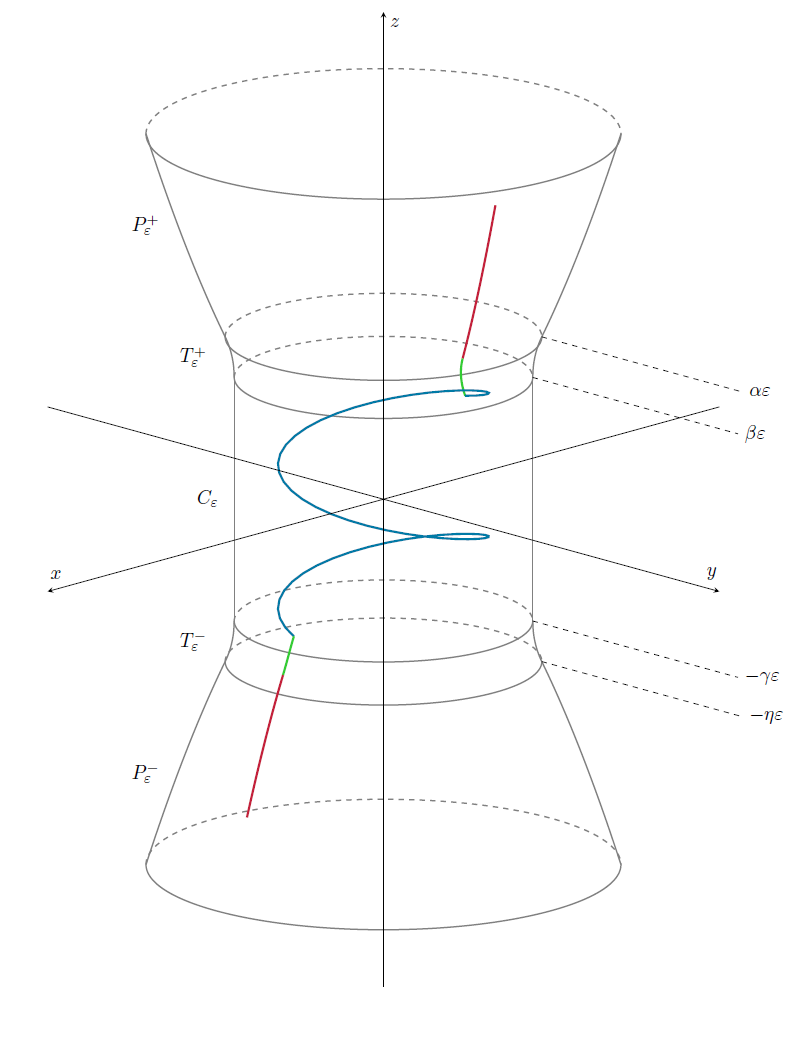}
\caption{The figure represents an approximated trajectory in the construction of the proof of Theorem \ref{teo:main2}}
\end{figure} \\
The theorem answers question (Q1) in the negative.
However with our approach we are able to obtain only solutions of the form $X^\Theta$. Indeed, note that another possible way to define a flow for $\mathbf{x}\in P^+$ is the following:
\begin{equation}\label{eq:xpsi1}
\begin{dcases}
X_1(t,r,\theta,z)=\frac{r}{\sqrt{z}}\sqrt[4]{z^2-4t}\ \cos\theta \\
X_2(t,r,\theta,z)=\frac{r}{\sqrt{z}}\sqrt[4]{z^2-4t}\ \sin\theta \\
X_3(t,z)=\sqrt{z^2-4t}
\end{dcases}
\hspace{0.7cm} \mathrm{for} \ t \in \left[0,\frac{z^2}{4} \right],
\end{equation}
and
\begin{equation}\label{eq:xpsi2}
\begin{cases}
X_1(t,r,\theta,z)=\displaystyle\frac{r}{\sqrt{z}}\sqrt[4]{4t-z^2} \ \cos\psi(\theta)\\
X_2(t,r,\theta,z)=\displaystyle\frac{r}{\sqrt{z}}\sqrt[4]{4t-z^2} \ \sin\psi(\theta) \\ 
X_3(t,z)=-\sqrt{4t-z^2}
\end{cases}
\hspace{0.7cm} \mathrm{for} \  t \geq\frac{z^2}{4},
\end{equation}
where the map $\psi:[0,2\pi]\to[0,2\pi]$ is arbitrary and $(r,\theta,z)$ denote the cylindrical coordinates in $\R^3$. It is easy to check that the map in \eqref{eq:xpsi1},\eqref{eq:xpsi2} is a solution of the ODE relative to $b$; we call $X_\psi$ such a map. It will turn out to be useful the flow on $P\setminus\{0\}$ and not only in $\mathring{P}$ although we deal with functions defined almost everywhere with respect to the 3D Lebesgue measure. The reason for that lies in the fact that for our purpose we will compute $X$ on $\partial P\setminus\{0\}$; this would not make sense without a suitable definition of $X$ on the boundary of $P$. Such definition is made accordingly to the everywhere definition of $b$. In the next section we will discuss the conditions that the map $\psi$ has to satisfied in order for $X_\psi$ to be a regular Lagrangian flow of $b$.\\

\section{Regular Lagrangian flows and measure preserving map on the circle}
In this section we prove that solutions of the form $X_\psi$ are regular Lagrangian flows of $b$ when $\psi$ is a measure preserving map. Before doing this, note that the map $X_\psi$ associated to $\psi(\theta)=\alpha$, where $\alpha \in(0,2\pi]$ is fixed, is a solution of the ODE but it does not preserve the 3D Lebesgue measure and then it is not a regular Lagrangian flow. \\
Now we recall the definition of a mesure preserving map on the unit circle.
\begin{defn}
Let $\psi:\mathbb{S}^1\rightarrow\mathbb{S}^1$ be a measurable map, where $\mathbb{S}^1=\R /2\pi\Z$ is the unit circle with the 1D Lebesgue measure. The map $\psi$ is called measure preserving if $$\psi\#\mathscr{L}^1=\mathscr{L}^1.$$
\end{defn}
We identify $\mathbb{S}^1\sim [0,2\pi]$ and we define the set $\mathscr{M}$ as
$$
\mathscr{M}:=\{ \psi:[0,2\pi]\to[0,2\pi]: \psi \ \mathrm{satisfies \ Definition \ 3.1} \}.
$$
Moreover, define the maps
$$
I_\pm:\theta\in [0,2\pi]\rightarrow (\cos\theta,\sin\theta,\pm 1)\in \R^3.
$$
\begin{prop}
Given a regular Lagrangian flow $X$ there exists $\psi\in\mathscr{M}$ such that $X=X_\psi$. Viceversa given $\psi\in\mathscr{M}$ there exists a unique regular Lagrangian flow $X$ such that $X=X_\psi$.
\end{prop}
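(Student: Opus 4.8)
The plan is to establish the two implications separately, after a common reduction. The key preliminary observation is that away from the origin the field $b$ is locally Lipschitz: it is smooth on $\mathring P^+\setminus\{0\}$ and on $\mathring P^-\setminus\{0\}$, it vanishes on $\R^3\setminus P$, and it is tangent to $\partial P$, so that the explicit trajectories show that $\mathring P^\pm$, $\partial P^\pm$ and the complement of $P$ are invariant. Hence, by the Cauchy--Lipschitz theorem, for a.e.\ starting point the trajectory is \emph{uniquely} determined as long as it stays away from the origin, and it must coincide with the explicit curves in \eqref{eq:xpsi1}. Every trajectory issued from $P^+$ reaches the origin at time $z^2/4$ and never returns to it, so the only indeterminacy in an integral solution is the angular direction along which it leaves the origin into $P^-$, a priori a measurable function of the initial datum. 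Thus any regular Lagrangian flow has the structure of \eqref{eq:xpsi1}--\eqref{eq:xpsi2}, the whole freedom being encoded in an angular map $\psi$, and the proposition amounts to identifying the admissible $\psi$.

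For the converse implication, fix $\psi\in\mathscr M$ and consider the explicit map $X_\psi$ of \eqref{eq:xpsi1}--\eqref{eq:xpsi2}. That $X_\psi$ solves the ODE for a.e.\ $\mathbf x$ is the direct verification already mentioned in Section~2. It remains to check \eqref{eq:incom}, which for the divergence-free $b$ means the equality $X_\psi(t,\cdot)\#\mathscr L^3=\mathscr L^3$. I would compute this pushforward in cylindrical coordinates $(r,\theta,z)$ on the region that has already crossed the origin at time $t$, where the map is block–triangular, $(r,\theta,z)\mapsto(\rho,\phi,\zeta)$ with $\zeta=-\sqrt{4t-z^2}$, $\rho=\tfrac{r}{\sqrt z}\sqrt[4]{4t-z^2}$ and $\phi=\psi(\theta)$. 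The $(r,z)\mapsto(\rho,\zeta)$ block is measure preserving: a one-line computation gives for the relevant factor $\tfrac{\rho}{r}\,\partial_r\rho\,\partial_z\zeta=(4t-z^2)^{1/4-1/2+1/4}=1$, whence $r\,\de r\,\de z=\rho\,\de\rho\,\de\zeta$. Since $\phi$ depends on $\theta$ alone, Fubini's theorem factorises the pushforward into this radial–height part and the angular integral, and the latter is preserved \emph{precisely} because $\psi\#\mathscr L^1=\mathscr L^1$. Hence $X_\psi$ preserves $\mathscr L^3$ and is a regular Lagrangian flow; its uniqueness is immediate, as the explicit formulas determine it $\mathscr L^3$-almost everywhere.

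For the direct implication I would start from an arbitrary regular Lagrangian flow $X$. By the reduction above, $X$ coincides off the origin with the explicit curves, so it equals $X_\psi$ for the angular map $\psi$ it induces at the crossing; concretely $\psi$ is read off on the boundary circle through $I_\pm$, since $X$ sends $\partial P^+$ to $\partial P^-$ with $I_+(\theta)\mapsto I_-(\psi(\theta))$. It then remains to show $\psi\in\mathscr M$, and this I would obtain by reversing the computation of the previous paragraph: were $\psi\#\mathscr L^1\neq\mathscr L^1$, the same factorisation of $X(t,\cdot)\#\mathscr L^3$ would break the equality $X(t,\cdot)\#\mathscr L^3=\mathscr L^3$ forced by $b$ being divergence-free, contradicting that $X$ is a regular Lagrangian flow. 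Therefore $\psi$ is measure preserving.

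The main obstacle is the reduction itself: showing that the freedom at the origin is captured by a \emph{single} map of the angular variable, independent of the paraboloid leaf $\kappa=r^2/z$ carrying the trajectory. This is delicate, since the definition of regular Lagrangian flow constrains only the almost-everywhere behaviour and the global measure, and imposes no semigroup property, so a priori the exit angle could depend on $\kappa$ as well. I expect to control this through the measure-preservation constraint together with the everywhere definition of $X$ on $\partial P\setminus\{0\}$ and the circle maps $I_\pm$, disintegrating $\mathscr L^3$ along the leaves; crucially, the argument must be carried out at the level of pushforwards rather than pointwise Jacobians, since a general $\psi\in\mathscr M$ need be neither injective nor differentiable.
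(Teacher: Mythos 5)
The parts of your plan that you actually carry out coincide with the paper's own argument. For the converse implication, your block--triangular/Fubini factorisation of the pushforward in cylindrical coordinates (the radial--height block preserving $r\,\de r\,\de z$, the angular factor preserved exactly when $\psi\#\mathscr{L}^1=\mathscr{L}^1$) is the same computation that the paper performs on the cylindrical sector sets \eqref{cs}, and your uniqueness remark matches the paper's unique solvability of \eqref{odepsi}. For the direct implication, reading $\psi$ off the boundary circle through $I_\pm$ is literally the paper's definition $\psi(\theta)=I_-^{-1}\bigl(X\bigl(\tfrac12,I_+(\theta)\bigr)\bigr)$, and your ``reversed factorisation'' is the paper's computation \eqref{mpm}.

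The step you single out as ``the main obstacle'' is, however, a genuine gap, and the mechanism you propose for closing it (the measure-preservation constraint plus disintegration along the leaves) provably cannot work, because disintegration \emph{decouples} the leaves rather than coupling them. Writing points of $P^+$ as $(\kappa,\theta,z)$ with $\kappa=r^2/z$, one has $\mathscr{L}^3=\tfrac{z}{2}\,\de\kappa\,\de\theta\,\de z$, and measure preservation of the time-$t$ map is \emph{equivalent} to the angular exit map being measure preserving on a.e.\ leaf and for a.e.\ starting height separately (the height dependence is allowed precisely because no semigroup property is imposed); nothing forces these angular maps to agree. Concretely, let $f$ be any non-constant measurable function and define a flow by \eqref{eq:xpsi1} for $t\le z^2/4$ and by \eqref{eq:xpsi2} with $\psi(\theta)$ replaced by $\theta+f(r^2/z)\ \mathrm{mod}\ 2\pi$ for $t\ge z^2/4$. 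Each trajectory is an integral curve of $b$ (it is a trajectory of $X^\Theta$ as in \eqref{eq:thetasol} with $\Theta=f(\kappa)$, which is constant along the trajectory), and the time-$t$ map pushes $\mathscr{L}^3$ to $\mathscr{L}^3$ by exactly your own factorisation, since on each leaf the angular map is a rotation; yet this flow equals no $X_\psi$. One can even let trajectories exit the origin onto a \emph{different} leaf $\kappa'\neq\kappa$ and still preserve $\mathscr{L}^3$. So the reduction to a single map $\psi(\theta)$ cannot be extracted from pushforward information at all; it needs an additional hypothesis on the flow beyond the definition of regular Lagrangian flow. You should also be aware that the paper's own proof is silent on precisely this point: it defines $\psi$ from the values of $X$ on the single circle $I_+([0,2\pi])$ --- a Lebesgue-null set, on which an a.e.-defined flow carries no canonical values --- and never verifies $X=X_\psi$ away from that circle. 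Your proposal is faithful to the paper where the paper is complete, and honestly flags the step the paper skips; but that step remains open, and as stated it cannot be closed by the argument you sketch.
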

\begin{proof}
Consider a regular Lagrangian flow $X(t,\mathbf{x})$ and define
$$
\psi(\theta)=I_-^{-1}\left( X\left(\frac{1}{2},I_+(\theta)\right)\right)
\hspace{1cm}\theta\in[0,2\pi].
$$
We need to show that such a map preserves the 1D Lebesgue measure: consider a Borel set $E\subseteq[0,2\pi]$ and define $\mathbf{E}$ as the set
\[
\mathbf{E}=\{ (\rho,\theta,z):\theta\in E, \rho\in [0,\sqrt{z}], z\in [-1,0] \}.
\]
A straightforward computation shows that
\[
X^{-1}\left(\frac{1}{2},\cdot\right)(\mathbf{E})=\{ (\rho,\theta,z):\theta\in \psi^{-1}(E), \rho\in [0,\sqrt{z}], z\in [1,\sqrt{2}] \}
\]
and
\begin{equation}\label{mpm}
\mathscr{L}^1(\psi^{-1}(E))=4\mathscr{L}^3\left(X^{-1}\left(\frac{1}{2},\cdot\right)(\mathbf{E})\right)=4\mathscr{L}^3(\mathbf{E})=\mathscr{L}^1(E),
\end{equation}
hence $\psi$ is measure preserving. \\
We now prove the other implication. Consider a measure preserving map $\psi$, a point $\mathbf{x}\in \mathbb{S}^1\times\{1\}$ and solve the system
\begin{equation}\label{odepsi}
\begin{cases}
\dot{X}(t,\mathbf{x})=b(X(t,\mathbf{x})) \\
X(0,\mathbf{x})=\mathbf{x}\\
X\left(\frac{1}{2},\mathbf{x}\right)=I_-\left(\psi(I_+^{-1}(\mathbf{x}))\right)
\end{cases}
\end{equation}
It is easy to see that \eqref{odepsi} admits a unique solution $X_{\psi}$. We have to prove that $X_\psi$ is measure preserving. A computation like \eqref{mpm} shows that $X_{\psi}(t,\mathbf{E})\# \mathscr{L}^3=\mathscr{L}^3(\mathbf{E})$ for all sets $\mathbf{E}$ of the form
\begin{equation}\label{cs}
\mathbf{E}=\{ (r,\theta,z): \theta\in E_1, r\in [0,\sqrt{z}], z\in E_2 \}
\end{equation}
where $E_1\subset[0,2\pi],E_2\subset\R$.
Sets of the form \eqref{cs} are a basis for the Borel $\sigma$-algebra, hence $X_\psi$ preserve the 3D Lebesgue measure on Borel sets. Since $X_\psi$ maps null sets into null sets, it follows that it is a regular Lagrangian flow.
\end{proof}

\section{Some ideas and heuristics on possible extensions}
Consider the maps
$$
\psi_1(\theta)=\begin{cases} \theta \hspace{1cm} &\mathrm{if} \ \theta\in[0,\pi), \\ 3\pi-\theta  \ \ & \mathrm{if} \ \theta\in[\pi,2\pi], \end{cases}
$$
and 
$$
\psi_2(\theta)=\begin{cases} 2\theta \hspace{1cm} &\mathrm{if} \ \theta\in[0,\pi), \\ 2(\theta-\pi)  \ \ & \mathrm{if} \ \theta\in[\pi,2\pi]. \end{cases}
$$
The map $\psi_1$ leaves half a circle fixed and flips the other half, while the map $\psi_2$ rotates twice around $\mathbb{S}^1$. Since the strategy of the proof of Theorem \ref{teo:main2} produces in the limit only solutions of the form $X^\Theta$, we wonder if it is possible to obtain, as limit of a suitable approximation, the flows $X_{\psi_1},X_{\psi_2}$ associated to $\psi_1,\psi_2$ as in the proof of Proposition 3.2. This is a concrete example of the following general question:
\begin{itemize}
\item[(Q2)]\emph{Does the approximation procedure obtained by smoothing the vector field select a subset of the flows of $b$?}
\end{itemize}
The strategy of \cite{CCS} selects the regular Lagrangian flows corresponding to measure preserving map of the form $\psi(\theta)=\theta+\Theta \ \mathrm{mod \ } 2\pi$. These flows are in a sense ``better" than the others for the following reasons:
\begin{itemize}
\item the flows $X^\Theta$ self intersect only in the origin, while this is not true for $X_{\psi_2}$, which is not even a.e. invertible;
\item the Jacobian of $X^\Theta$ does not change sign, while this is the case for $X_{\psi_1}$.
\end{itemize} 
Consider a general smooth approximation $b_\e$ of the vector field $b$; the corresponding Cauchy problem admits a uniquely defined sequence of flows $X^\e$ and one can ask to which $X_\psi$ the sequence $X^\e$ may converge. It is not clear to us if it is possible to construct an approximation of $b$ in such a way that the approximated flow converge to $X_{\psi_1}$ or $X_{\psi_2}$, especially if we want to approximate $b$ only close to the singularity at the origin. We can however provide some heuristics motivating why it is not trivial to exclude the possibility of getting $X_{\psi_1}$ in the limit just by arguing on the base of "topological obstructions". In fact, we can approximate the flow $X_{\psi_1}$ with maps $X^\e$ of the form:
$$
X^\e(t,\mathbf{x})=
\begin{cases}
X(t,\mathbf{x}) \ & \mathrm{for} \ 0\leq t \leq t_1^\e:=\frac{z^2-\e^2}{4}, \\
\frac{t-t_1^\e}{t_2^\e-t^\e_1}I_-\left(\psi_1(I^{-1}_+(\mathbf{x}))\right)+\frac{t^\e_2-t}{t^\e_2-t^\e_1}X(t^\e_1,\mathbf{x})\ & \mathrm{for} \ t^\e_1\leq t\leq t^\e_2:=\frac{z^2}{4} +\frac{\e^2}{4}, \\
X\left(t-t_2^\e,I_-\left(\psi_1(I^{-1}_+(\mathbf{x}))\right)\right) \ & \mathrm{for} \ t^\e_2\leq t <\infty,
\end{cases}
$$
where $\mathbf{x}\in P^+$. Each $X^\e$ is a well-defined map, which is however not a flow a vector field. Therefore, this does not answer our question. However, this example tells us that an answer in the positive to our question could not just rely on topological properties of the approximating flows.

\subsection*{Acknowledgments}
This research has been supported by the ERC Starting Grant 676675 FLIRT.

\end{document}